\newcommand{\be}{\begin{eqnarray}}
\newcommand{\ee}{\end{eqnarray}}
\newtheorem{theo}{Theorem}%[section]
\newtheorem{lemma}{Lemma}
\newcommand{\R}{\mathbb R}
\begin{document}

\title{On Shock Profiles in Four-Field Formulations\\ of Dissipative Relativistic
Fluid Dynamics}

\author{\it Heinrich Freist\"uhler$^{1,a}$
{\rm and}\  Blake Temple$^{2,b}$}
\date{March 17, 2023}
\maketitle

Affiliations:\\
{\small
 1) Department of Mathematics and Statistics, University of Konstanz,
 78457 Konstanz, Germany\\
2) Department of Mathematics,
University of California at Davis, Davis CA 95616, USA

a) E-mail: {\tt heinrich.freistuehler@uni-konstanz.de}\\
% b) E-mail: {\tt moritz-andreas.reintjes@uni-konstanz.de}\\
b) E-mail: {\tt temple@math.ucdavis.edu} 
}

\vskip -4cm
\begin{abstract}
This paper shows that in second-order hyperbolic
systems of partial differential equations proposed in 
authors' earlier paper %s 
%(Proc.\ R. Soc.\ A 470 (2014) and 473 (2017),
(J. Math.\ Phys.\ 59 (2018)) for modelling the relativistic dynamics of barotropic
%and nonbarotropic 
fluids in the presence of viscosity and heat conduction,
shock waves of arbitrary strength have smooth, monotone dissipation profiles.
The results and arguments extend classical considerations of Weyl 
(Comm.\ Pure Appl.\ Math. {\bf 2} (1949))
and Gilbarg (Amer.\ J. Math.\ {\bf 73} (1951)) to the relativistic setting.
\end{abstract}

\newpage
\section{Introduction}\label{Introduction}
In the theory of relativity, the state of a barotropic fluid can be 
described by a 4-vector 
$
\psi^\alpha,\alpha=0,1,2,3,
$
which, as a function of the space-time coordinates $x^\beta,\beta=0,1,2,3,$ is governed
by a system of partial differential equations, 
\be\label{Teq}
{\partial \over \partial x^\beta}\left(T^{\alpha\beta}
+\Delta T^{\alpha\beta}
\right)
=0,\quad\alpha=0,1,2,3,
\ee
where in case viscosity and / or heat conduction are active,
%Here $\Delta T^{\alpha\beta}=0$ if there are no dissipative effects, while  
\be\label{TintermsofB}
\Delta T^{\alpha\beta}=-B^{\alpha\beta\gamma\delta}\frac{\partial \psi_\gamma}{\partial x\delta}.
\ee
The tensors 
\be
T^{ \alpha\beta}\text{ and }B^{\alpha\beta\gamma\delta},\quad\alpha,\beta,\gamma,\delta=0,1,2,3,
\ee
are given functions of the four fields, i.e., the components of $\psi^\alpha$. 
%$T^{\alpha\beta}$ and $\Delta T^{\alpha\beta}$ are the ideal part and the dissipative part of the total temporospatial 
%flux $T^{a\beta}+\Delta T^{a\beta}$, whose 
%conservativity the equations of motion \eqref{Teq} state. 
%
In the absence of viscosity and heat 
conduction, the equations of motion reduce to the relativistic Euler equations 
\be\label{Euler}
{\partial \over \partial x^\beta}T^{\alpha\beta}
=0,
\quad \alpha=0,1,2,3.
\ee
The present paper focusses on shock waves, whose ideal version is given by discontinuous  
solutions to the latter, \eqref{Euler}, 
of the (prototypical) form 
\be\label{lsw}
\psi_\alpha(x)=\begin{cases}
       \psi_\alpha^-,&x^\beta \xi_\beta<0,\\
       \psi_\alpha^+,&x^\beta \xi_\beta>0,
       \end{cases}
\ee
and asks whether they can be properly represented in the dissipative setting. % \eqref{Teq}.
A standard way to achieve such representation is a `dissipation profile', i.e., 
a regular solution of \eqref{Teq} that depends also only on $x^\beta \xi_\beta$
and connects the two states forming the shock, in other words, a solution $\hat\psi$
of the ODE  
\be\label{profeq}
\xi_\beta\xi_\delta B^{\alpha\beta\gamma\delta}(\hat\psi)\hat\psi_\gamma'
=
\xi_\beta T^{\alpha\beta}(\hat\psi)-q^\alpha,
\quad 
q^\alpha:=\xi_\beta T^{\alpha\beta}(\psi\pm),
\ee
on $\R$ which is heteroclinic to them,
\be
\label{hetero}
\hat\psi_\alpha(-\infty)=\psi_\alpha^-,\quad  
\hat\psi_\alpha(+\infty)=\psi_\alpha^+.
\ee
Concretely, the state variables of a barotropic fluid are given by
$$
\psi^\alpha=\frac{U^\alpha}\theta,
$$
where $U^\alpha$ and $\theta$ are the 4-velocity and the temperature, 
$$
\theta=\left(-\psi_\alpha\psi^\alpha\right)^{-1/2}, 
$$
the fluid is specified  
by prescribing its pressure as a function of the temperature,
$$
p=\tilde p(\theta),
$$  
and the ideal part of the energy-momentum tensor is given by  
\be\label{idealgas}
T^{\alpha\beta}=\frac{\partial (\tilde p(\theta)\psi^\beta)}{\partial\psi_\alpha}
=\theta^3\frac{d\tilde p(\theta)}{d \theta}\psi^\alpha\psi^\beta
+\tilde p(\theta)g^{\alpha\beta}.
\ee
We assume strict causality 
in the sense that
\be\label{Hessianpositive}
\left(\frac{\partial^2(\tilde p(\theta)\psi^\beta)}{\partial \psi_\alpha\partial\psi_\gamma}T_\beta\right)_{\alpha,\gamma=0,1,2,3}
\text{is negative definite, all future non-spacelike directions }T_\beta. 
%\text{ with }T_\beta T^\beta<0. 
\ee
As regards the dissipative part %in the case of barotropic fluids 
we specify, following \cite{FT1,FT2,FT3}, 
$-\Delta T^{\alpha\beta}$ as\footnote{We use the metric $g^{\alpha\beta}=(-+++)$ and the projector 
$\Pi^{\alpha\beta}=g^{\alpha\beta}+U^\alpha U^\beta$.}
\iffalse
\be\label{DeltaT}
\begin{aligned}
-\Delta T^{\alpha\beta}_\Box
&=&\eta \Pi^{\alpha\gamma}\Pi^{\beta\delta}
\left[\frac{\partial U_{\gamma}}{\partial x^{\delta}}
+\frac{\partial U_{\delta}}{\partial x^{\gamma}}
-\frac{2}{3}g_{\gamma\delta}\frac{\partial U^{\epsilon}}{\partial x^{\epsilon}}\right]
+\check\zeta \Pi^{\alpha\beta}\frac{\partial U^{\gamma}}{\partial x^{\gamma}}\\
&&+\sigma\left[U^{\alpha}U^{\beta}\frac{\partial U^{\gamma}}{\partial x^{\gamma}}
-\left(\Pi^{\alpha\gamma}U^{\beta}
+\Pi^{\beta\gamma}U^{\alpha}\right)
U^{\delta}\frac{\partial U_{\gamma}}{\partial x^{\delta}}\right]
%\\
%&&+
%\chi
%\bigg[
%\bigg(U^\alpha\frac{\partial \theta}{\partial x_\beta}
%+U^\beta\frac{\partial \theta}{\partial x_\alpha}\bigg)
%-g^{\alpha\beta}U^\gamma\frac{\partial \theta}{\partial x^\gamma}
%\bigg],
\end{aligned}
\ee
and, for nonbarotropic fluids, $-\Delta T^{4\beta}$ as
\be\label{ournewNansatz}
-\Delta T^{4\beta}_\Box
=
\tilde\kappa g^{\beta\gamma}{\partial \psi\over \partial x^\gamma}
+\tilde\sigma\left(U^\beta \Pi^{\gamma\delta}-U^\delta\Pi^{\beta\gamma}\right)
{\partial U_\gamma\over \partial x^\delta},
\ee
where the coefficients $\eta,\check\zeta,\sigma$ correspond to viscosity, while 
$\tilde\kappa$ quantifies the heat conductivity of the fluid. 

If both viscosity and heat conduction are active, the dissipation tensor reads  

\fi
\be\label{DeltaT}
\begin{aligned}
-\Delta T^{\alpha\beta}_\Box
&=&\eta \Pi^{\alpha\gamma}\Pi^{\beta\delta}
\left[\frac{\partial U_{\gamma}}{\partial x^{\delta}}
+\frac{\partial U_{\delta}}{\partial x^{\gamma}}
-\frac{2}{3}g_{\gamma\delta}\frac{\partial U^{\epsilon}}{\partial x^{\epsilon}}\right]
+\check\zeta \Pi^{\alpha\beta}\frac{\partial U^{\gamma}}{\partial x^{\gamma}}\\
&&+\sigma\left[U^{\alpha}U^{\beta}\frac{\partial U^{\gamma}}{\partial x^{\gamma}}
-\left(\Pi^{\alpha\gamma}U^{\beta}
+\Pi^{\beta\gamma}U^{\alpha}\right)
U^{\delta}\frac{\partial U_{\gamma}}{\partial x^{\delta}}\right]
\\
&&+
\chi
\bigg[
\bigg(U^\alpha\frac{\partial \theta}{\partial x_\beta}
+U^\beta\frac{\partial \theta}{\partial x_\alpha}\bigg)
-g^{\alpha\beta}U^\gamma\frac{\partial \theta}{\partial x^\gamma}
\bigg],
\end{aligned}
\ee
where 
\be\label{choicetildezeta}
\sigma=((4/3)\eta+\zeta)/(1-c_s^2)-c_s^2\chi\theta
\quad\text{and}\quad
\check\zeta=\zeta+c_s^2\sigma-c_s^2(1-c_s^2)\chi\theta
\ee
with $\eta,\zeta,\chi$ the coefficients of shear viscosity, bulk viscosity, and thermal conductivity,
and $0<c_s<1$ the speed of sound.

The following are our main results.

\begin{theo}\label{ThmViscousProfilesBaro1}
Consider a barotropic fluid with viscosity and without heat conduction. 
Assume that the acoustic mode is genuinely nonlinear. Then any Lax shock 
%solving \eqref{energmomentumconservation}
has a dissipation profile with respect to $\Delta T_\Box$.
\end{theo}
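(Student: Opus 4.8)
\emph{Proof proposal.} The plan is to recast the profile problem \eqref{profeq}--\eqref{hetero} as the phase-portrait analysis of a low-dimensional autonomous ODE and then, following Weyl and Gilbarg, to produce the connecting orbit by a global monotonicity argument valid for shocks of arbitrary strength. First I would exploit the Lorentz covariance of \eqref{profeq}: boosting and rotating to the rest frame of the shock, I may take $\xi=(0,1,0,0)$, so that the profile depends on the single variable $s:=x^1$ and the shock speed is zero. Because the theorem concerns the acoustic (genuinely nonlinear) mode, whose eigenvectors do not excite the shear directions, I would seek profiles with no transverse flow, $U^2=U^3\equiv 0$; the state then lives on the two-dimensional manifold coordinatized, say, by the temperature $\theta$ and the normal velocity $v=U^1/U^0$. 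Writing $F^\alpha(\psi):=\xi_\beta T^{\alpha\beta}(\psi)-q^\alpha$, the two nontrivial components $\alpha=0,1$ vanish \emph{exactly} at $\psi^\pm$ by the Rankine--Hugoniot relations defining the shock, so $\psi^-,\psi^+$ are precisely the rest points of $\mathcal A(\hat\psi)\hat\psi'=F(\hat\psi)$, where $\mathcal A^{\alpha\gamma}:=\xi_\beta\xi_\delta B^{\alpha\beta\gamma\delta}$ is read off from \eqref{DeltaT} with $\chi=0$.

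Next I would restrict $\mathcal A$ to the acoustic plane and settle its rank. The absence of heat conduction should make one combination of the two balance laws free of $s$-derivatives, turning rest-frame energy conservation into an algebraic ``generalized Rayleigh/Hugoniot'' relation $G(\theta,v)=0$ that cuts out a one-dimensional curve through both rest points; along that curve the momentum balance collapses to a single scalar equation $\hat v'=f(\hat v)$. Should the hyperbolizing $\sigma$-term instead keep $\mathcal A$ of full rank on the plane, I would retain a genuine planar field $\hat\psi'=\mathcal A^{-1}F$. Either way, invertibility of $\mathcal A$ on the relevant subspace must be established from positivity of $\eta,\zeta$ together with strict causality \eqref{Hessianpositive}; this is what guarantees the vector field is well defined along the \emph{entire} orbit and not merely near the endpoints.

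I would then linearize at $\psi^\pm$. The signs of the eigenvalues of $\mathcal A^{-1}DF$ are governed by the differences between the acoustic characteristic speed $\lambda_{\mathrm{ac}}$ and the (vanishing) shock speed, so the Lax inequalities, which in the shock frame read $\lambda_{\mathrm{ac}}(\psi^-)>0>\lambda_{\mathrm{ac}}(\psi^+)$, force $\psi^-$ to carry an unstable direction into the plane and $\psi^+$ a stable one, while genuine nonlinearity makes these eigenvalues nonzero, so both rest points are hyperbolic. Genuine nonlinearity also controls the global geometry: rendering the acoustic shock speed monotone along the Hugoniot, it lets me argue that $f$ (resp.\ $F$) has no further zero strictly between $\psi^-$ and $\psi^+$, so that no spurious rest point can interrupt the connection.

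Finally I would construct the heteroclinic orbit. In the scalar reduction this is immediate: $f$ vanishes only at $v^\pm$, keeps a fixed sign in between, and the trajectory leaving $v^-$ is monotone and converges to $v^+$, which simultaneously delivers the asserted monotonicity of the profile. In the planar case I would build, \`a la Gilbarg, a trapping region bounded by the nullclines $\{F^0=0\}$, $\{F^1=0\}$ and the Hugoniot-related curve, show the unstable branch of $\psi^-$ enters it, and exclude closed orbits and escape, so that Poincar\'e--Bendixson leaves $\psi^+$ as the only admissible $\omega$-limit. The main obstacle I anticipate is exactly this global step for arbitrary amplitude: verifying, uniformly in $|\psi^+-\psi^-|$, that $\mathcal A$ stays invertible, that no interior rest point appears, and that the correct monotone/trapping structure persists --- all of which must be wrung from the explicit signs in \eqref{idealgas}--\eqref{choicetildezeta} under strict causality \eqref{Hessianpositive} and genuine nonlinearity, rather than from any smallness of the shock.
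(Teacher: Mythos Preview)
Your plan is correct and matches the paper's approach: the paper carries out precisely your first scenario, observing that without heat conduction $-(\Delta T^{\alpha 1}_\Box)'=\sigma\,(u^\alpha)'$, so contracting with $u_\alpha$ (using $u_\alpha u^\alpha=-1$) turns one balance law into the algebraic constraint and reduces the system to the scalar ODE $\sigma\,U'(\rho)\rho'=R(\rho)$ with $R>0$ between its two zeros. The ``no further rest point'' step you gesture at is made precise in the paper's Lemma~1 via a function $g(\rho)$ whose strict concavity at any critical point follows directly from the genuine-nonlinearity inequality~\eqref{GNL}.
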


\begin{theo}\label{ThmViscousProfilesBaro2}
Consider a barotropic fluid with viscosity and with heat conduction. 
Assume that the acoustic mode is genuinely nonlinear. Then any Lax shock 
%solving \eqref{energmomentumconservation}
has a dissipation profile with respect to $\Delta T_\Box$.
\end{theo}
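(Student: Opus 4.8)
The plan is to adapt the Weyl--Gilbarg program for the Navier--Stokes shock structure to the relativistic profile equation \eqref{profeq}, the new feature being the relativistic form of $T^{\alpha\beta}$ in \eqref{idealgas} and of $B^{\alpha\beta\gamma\delta}$ encoded in \eqref{DeltaT}--\eqref{choicetildezeta}. First I would use Lorentz invariance to pass to a frame adapted to the shock: choosing coordinates so that $\xi_\beta$ is proportional to a single (say, the first spatial) covector and performing an additional boost in the tangential directions, one may assume both $\psi^-$ and $\psi^+$ have velocity normal to the front. Writing the sought profile as a function of the scalar $s=x^\beta\xi_\beta$, the transverse components of \eqref{profeq} are then satisfied identically by the ansatz of vanishing transverse velocity, and \eqref{profeq} collapses to a planar autonomous system in the two genuinely longitudinal unknowns, which I would take to be the normal velocity and the temperature $\theta=(-\psi_\alpha\psi^\alpha)^{-1/2}$. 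By the very definition $q^\alpha=\xi_\beta T^{\alpha\beta}(\psi_\pm)$, the states $\psi^-$ and $\psi^+$ are exactly the rest points of this planar system, the consistency of the two values of $q^\alpha$ being the Rankine--Hugoniot relation carried by the Lax shock.

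Next I would bring the reduced system into the explicit form $\hat\psi'=V(\hat\psi)$ by inverting the $2\times 2$ coefficient matrix obtained from $\xi_\beta\xi_\delta B^{\alpha\beta\gamma\delta}$ on the longitudinal subspace. Here the strict positivity of the transport coefficients $\eta,\zeta,\chi>0$ appearing in \eqref{DeltaT}--\eqref{choicetildezeta} is what makes this matrix nonsingular, so that $V$ is a smooth vector field on the open set where $-\psi_\alpha\psi^\alpha>0$, the interior of the forward cone. In contrast to the heat-conduction-free case of Theorem~\ref{ThmViscousProfilesBaro1}, the $\chi$-term contributes a genuine $\theta'$ to the energy balance, so the temperature cannot be slaved algebraically to the velocity and the system stays truly two-dimensional; this is precisely the Gilbarg situation rather than the scalar Weyl situation. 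Linearizing $V$ at the two endpoints, I would read off from the Lax inequalities for the acoustic family, together with the strict causality / negative-definiteness hypothesis \eqref{Hessianpositive}, that $\psi^-$ possesses at least one unstable direction, so that a uniquely determined trajectory emanates from $\psi^-$ into the interior of the forward cone; this trajectory is the only candidate profile.

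The heart of the argument, and the step I expect to be the main obstacle, is the global capture of this trajectory at $\psi^+$. Following Gilbarg, I would construct a bounded, positively invariant region in the longitudinal phase plane, bounded by the two nullclines of $V$ together with short transversal arcs on which the flow points inward, and chosen to contain $\psi^-$ and $\psi^+$ and no third rest point. Genuine nonlinearity of the acoustic mode and the convexity supplied by \eqref{Hessianpositive} are what control the geometry of the nullclines and make such a region exist; for shocks of arbitrary strength the delicate point is exactly that no spurious third equilibrium lies in the region. One then rules out closed orbits inside it, for instance by a Bendixson--Dulac sign computation for $\operatorname{div}V$, which the positivity of $\eta,\zeta,\chi$ should render definite, and invokes the Poincar\'e--Bendixson theorem: the trajectory leaving $\psi^-$ stays trapped, its $\omega$-limit set is free of cycles and contains a single further equilibrium, and hence must equal $\{\psi^+\}$. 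The specifically relativistic points to be verified are that the captured orbit never reaches the light-cone boundary $-\psi_\alpha\psi^\alpha=0$, where $V$ degenerates, and that the velocity stays subluminal; both follow from confining the region to the forward cone.

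Finally, monotonicity of the profile comes out of the same construction: inside the invariant region one of the longitudinal variables, the normal velocity or equivalently the temperature, has a vector-field component of a single sign between the two equilibria, so that $\hat\psi$ is monotone in $s$. This reproduces, in the relativistic four-field setting and with heat conduction present, the monotone dissipation profiles of Weyl and Gilbarg.
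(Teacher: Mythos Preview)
Your plan follows the classical Gilbarg route (nullclines, invariant region, Bendixson--Dulac, Poincar\'e--Bendixson), whereas the paper exploits a structural feature you do not mention: the flux $T^{\alpha 1}$ is a gradient, $T^{\alpha 1}=\partial L^1/\partial\psi_\alpha$ with $L^1(\psi)=\tilde p(\theta)\psi^1$, so that $L(\psi)=L^1(\psi)-q^\gamma\psi_\gamma$ is a strict Liapunov function for the planar profile system (the coefficient matrix $B^{\alpha 1\gamma 1}$ being positive definite by causality). This single observation rules out periodic orbits and, since the Jacobian of the vector field equals the Hessian of $L$, identifies the two rest points as a strict local minimum of $L$ at $\psi^-$ and a saddle at $\psi^+$. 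The paper then runs a continuation argument: for weak shocks the profile exists and sits inside a closed sublevel curve of $L$; the blow-up of $L$ at $\partial\Psi$ and its monotone behaviour on suitable line segments make this topological picture robust as $q$ varies through the whole range \eqref{qqQ} furnished by Lemma~1.

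Your outline is not wrong in spirit, but two of its load-bearing steps are unverified and not obviously true. First, the Bendixson--Dulac step: you assert that positivity of $\eta,\zeta,\chi$ ``should render $\operatorname{div}V$ definite'', but for $V=(B^{\cdot 1\cdot 1})^{-1}(T^{\cdot 1}-q)$ with a state-dependent $B$ there is no a priori sign on the divergence; the paper's Liapunov function sidesteps this entirely. Second, the invariant-region construction for shocks of \emph{arbitrary} strength: you correctly flag the exclusion of spurious equilibria and the confinement to the forward cone as the delicate points, but you do not resolve them; the paper handles both at once through the level-set geometry of $L$ and its behaviour at $\partial\Psi$, together with Lemma~1 guaranteeing exactly two rest points. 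In short, the gradient/Liapunov structure of the relativistic flux is the key idea that makes the large-amplitude case go through cleanly, and it is absent from your proposal.
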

\iffalse
\begin{theo}\label{ThmViscousProfilesNonbaro}
Assume that for a nonbarotropic fluid 
the acoustic mode is genuinely nonlinear. Then any Lax shock 
%solving \eqref{energmomentumconservation}
has a dissipation profile with respect to $-\Delta T_\Box$.
\end{theo}
\fi

Sections 2 and 3 are devoted to the proofs of Theorems 1 and 2, respectively.
In Section 4, we %briefly recapitulate our original derivation of the dissipation tensor $\Delta T_\Box$ and 
contrast these findings with recently established properties 
of a different formulation that was proposed by Bemfica, Disconzi, and
Noronha in \cite{BDN}.

\section{Shock profiles in viscous barotropic fluids {without heat conduction}}
\setcounter{equation}0
To demonstrate Theorem 1, % \ref{ThmViscousProfiles}.
we write the pressure also as a function
$$
p=\hat p(\rho)
$$
of the energy 
$$
\rho=\theta \tilde p'(\theta)-\tilde p(\theta), 
$$
to which the sound speed is of course connected as $c_s^2=\hat p'(\rho)$,   
and recall from \cite{CB69,Bo,ST} that %for relativistic barotropic fluids
genuine nonlinearity of the acoustic mode
is characterized by the condition
\be\label{GNL}
(\rho+\hat p(\rho))\hat p''(\rho)+2(1-\hat p'(\rho))\hat p'(\rho)>0.
\ee
Consider an ideal shock wave \eqref{lsw}
and assume w.\ l.\ o.\ g.\ that the spatiotemporal direction of 
propagation is $(0,1,0,0)$, i.e., $\xi_\beta=\delta_{\beta1}$, and 
$\psi_2=\psi_3=0$ (as can always be achieved by a Lorentz transformation).
The profile ODE system, %\eqref{profileodegenxi}, now
\be\label{profileode}
-(\Delta T^{\alpha1}_\Box)'
=
T^{\alpha1}-q^\alpha,
\ee
then has two active equations, $\alpha=0,1$. 

We first characterize the rest points in terms of their dependence on the free constant $q^\alpha$.

\begin{lemma}
For every $q^1>0$ there exists a unique $Q(q^1)>0$ such that the following holds:
The algebraic system $$T^{\alpha1}(\cdot)=q^\alpha,\quad\alpha=0,1,$$ has 
more than one solution if and only if 
\be\label{qqQ}
q_1^2<q_0^2<q_1^2+Q(q^1),
\ee
in which case it has precisely two solutions.
\end{lemma}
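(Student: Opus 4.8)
The plan is to collapse the two rest-point equations $T^{01}(\hat\psi)=q^0$, $T^{11}(\hat\psi)=q^1$ into a single scalar equation and then read the number of solutions off the graph of the resulting function. Writing the ideal tensor \eqref{idealgas} in the equivalent perfect-fluid form $T^{\alpha\beta}=(\rho+p)U^\alpha U^\beta+p\,g^{\alpha\beta}$ and using $U^2=U^3=0$, I parametrize the state by the scalar velocity $v=U^1/U^0\in(-1,1)$ and by the energy $\rho$, so that $T^{01}=(\rho+p)\,v/(1-v^2)$ and $T^{11}=(\rho+p)\,v^2/(1-v^2)+p$. Dividing the first relation by the second gives $v=(q^1-p)/q^0$ for $q^0\neq0$; substituting back and clearing denominators eliminates $v$ and yields the single equation
\[
(q^0)^2=(q^1-\hat p(\rho))\,(q^1+\rho)=:G(\rho).
\]
The degenerate case $q^0=0$ forces $v=0$ and $p=q^1$, i.e.\ a single rest point, and it falls under $q_0^2<q_1^2$ since $q^1>0$.

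Next I would verify that, for $q^0\neq0$, the roots of $G(\rho)=(q^0)^2$ are in bijection with the rest points. For any such root the constraint $|v|<1$ is automatic: positivity of energy gives $q^1+\rho>0$, so $(q^0)^2=G(\rho)>0$ forces $q^1-p>0$, whence $v^2=(q^1-p)/(q^1+\rho)<1$; conversely every rest point yields a root, and distinct $\rho$ give distinct states. Because $q_0^2=(q^0)^2$ and $q_1^2=(q^1)^2$ for the metric $(-+++)$, the lemma thus reduces to the purely one-variable claim that the level set $\{G=(q^0)^2\}$ is a doubleton precisely when $(q^1)^2<(q^0)^2<G_{\max}$, where $G_{\max}:=\max_\rho G$ and $Q(q^1):=G_{\max}-(q^1)^2$.

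Finally I would analyze the graph of $G$ over the physical range $\rho\in(0,\infty)$, where $c_s^2=\hat p'(\rho)\in(0,1)$. Since $\hat p$ and $\rho$ vanish together as $\theta\to0$ one has $G(0^+)=(q^1)^2$, while $\hat p\to\infty$ gives $G\to-\infty$. The derivative $G'(\rho)=(q^1-\hat p)-c_s^2(q^1+\rho)$ vanishes exactly where $q^1+\rho=(\rho+\hat p)/(1-c_s^2)$, and the right-hand side of the equivalent relation $q^1=(\hat p+c_s^2\rho)/(1-c_s^2)$ runs continuously from $0$ to $\infty$, so a critical point exists for every $q^1>0$. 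The crux is to show this critical point is unique. Differentiating once more and substituting the critical-point relation gives
\[
G''=-\frac{1}{1-c_s^2}\Big[(\rho+\hat p)\hat p''+2(1-c_s^2)c_s^2\Big],
\]
whose bracket is exactly the left-hand side of the genuine-nonlinearity condition \eqref{GNL}; hence $G''<0$ at every critical point. A smooth function whose only critical points are strict local maxima can have at most one critical point, so $G$ rises strictly from $(q^1)^2$ to a unique maximum $G_{\max}>(q^1)^2$ and then decreases to $-\infty$. Intersecting the horizontal line at height $(q^0)^2$ with this single hump then yields two rest points exactly when $(q^1)^2<(q^0)^2<G_{\max}$ and at most one otherwise, which is the assertion with $Q(q^1)=G_{\max}-(q^1)^2>0$. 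The main obstacle I anticipate is precisely this second-derivative computation and its clean identification with \eqref{GNL}; once the single-hump structure is secured, the counting is immediate.
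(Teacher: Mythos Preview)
Your proposal is correct and follows essentially the same route as the paper. The paper eliminates the velocity via $u_1^2=(q^1-p)/(\rho+p)$ and studies $g(\rho)=-\rho\hat p(\rho)+q^1(\rho-\hat p(\rho))$ on $[0,\bar\rho]$, while you eliminate via $v=(q^1-p)/q^0$ and study $G(\rho)=(q^1-\hat p(\rho))(q^1+\rho)$ on $(0,\infty)$; since $G=g+q_1^2$, the two reductions coincide, and both proofs hinge on the identical observation that the genuine-nonlinearity condition \eqref{GNL} forces the second derivative to be negative at every critical point, yielding the single-hump structure.
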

\begin{proof}
To see this, note first that the equation $T^{11}(.)=q^1$ is equivalent to
$$
p<q^1\quad \text{and}\quad 
u_1^2=\frac{q^1-p}{\rho+p}.
$$
Under this condition, the equation $T^{01}(.)=q^0$ is equivalent to the combination 
of 
\be\label{q1}
(\rho+p)^2\frac{q^1-p}{\rho+p}\left(1+\frac{q^1-p}{\rho+p}\right)=q_0^2
\ee
and
\be\label{guterichtung}
u^1q^0>0.
\ee
We write \eqref{q1} as
\be\label{geq}
g(\rho)\equiv -\rho\hat p(\rho)+q^1(-\hat p(\rho)+\rho)=q_0^2-q_1^2
\ee
with $g$ defined on the interval $$I\equiv [0,\bar\rho] \quad \text{with }\hat p(\bar\rho)=q^1.$$ 
Note now that any stationary point of $g$ is a nondegenerate maximum. 
This follows as assuming 
$0=g'(\rho)$
%=-\hat p(\rho)-\rho\hat p'(\rho)+q_1(-p'(\rho)+1)$  
at some point $\rho \in I$ implies 
$$
q^1=\frac{\hat p(\rho)+\rho\hat p'(\rho)}{1-\hat p'(\rho)}
$$
and thus, using \eqref{GNL}, 
\be\label{gprimeprime}
g''(\rho)=-(\rho+q_1)p''(\rho)-2\hat p'(\rho)
=-\frac{\rho+\hat p(\rho)}{1-\hat p'(\rho)}\hat p''(\rho)-2\hat p'(\rho)
<0.
\ee
As $g'(0)>0$, this means that along $I$, $g$ increases 
from $g(0)=0$ to $Q\equiv\max_I g>0$ and then decays to $g(\bar\rho)=-q_1^2<0$,
the monotonicity being strict in both parts. Equation \eqref{geq} thus has more than one, namely 
two, positive solutions if and only \eqref{qqQ} holds. 
%Lemma 1 is proved.
\end{proof}
Returning to our shock wave \eqref{lsw}, we note that it must correspond to certain parameter 
values $q^0,q^1$ with the properties recorded in Lemma 1; regarding \eqref{guterichtung}, 
we fix signs\footnote{The opposite case $u^1,q^0<0$ differs only by a transformation 
$x^1\mapsto -x^1$.}  
as
$$
u^1,q^0>0. 
$$
Since
$$
-(\Delta T^{\alpha1}_\Box)'=\sigma (u^\alpha)',
$$
we can rewrite the traveling wave system \eqref{profileode} equivalently as
\begin{eqnarray}
\sigma u_\alpha (u^\alpha)'
&=&
u_\alpha (T^{\alpha1}-q^\alpha)
\label{profilepassive}
\\
\sigma v_\alpha(u^\alpha)'
&=&
v_\alpha (T^{\alpha1}-q^\alpha)
\label{profileactive}
\end{eqnarray}
with $(v^0,v^1)=(u^1,u^0)$ orthogonal to $(u^0,u^1)$. 
As $u_\alpha (u^\alpha)'=0$, equation 
\eqref{profilepassive} is an algebraic constraint,
\be\label{algcond}
q^0u^0-(\rho+q^1)u^1=0,
\ee
or
\be
u_1=U(\rho):=\left(\left(\frac{\rho+q^1}{q^0}\right)^2-1\right)^{-1/2}
\ee
By virtue of \eqref{algcond}, 
\be
u^0v_\alpha(u^\alpha)'=
u^0(v^0 u_0'+v^1 u_1')=u_1',
\ee
and
\be
u^0v_\alpha(T^{\alpha1}-q^\alpha)=u^0v_\alpha((\rho+p)u^\alpha u^1+pg^{\alpha1 }-q^\alpha)=
u^0(-v_0q^0+v_1(p-q^1)),
\ee
equation \eqref{profileactive} reduces to
$$
\sigma u_1'=(\rho+p)u_1^2+(p-q^1)
$$
or, using \eqref{algcond} again,
$$
\sigma U'(\rho)\rho'= 
R(\rho):=
(\rho+p)\left(\left(\frac{\rho+q^1}{q^0}\right)^2-1\right)^{-1}+(p-q^1)=
(\rho+p)\left(\frac{\rho+q^1}{q^0}\right)^2-(\rho+q^1).
$$
As $R>0$ between its two zeros, the heteroclinic solution connects them, and $\rho$ 
increases in the direction in which the fluid moves. 
The latter correctly fits the fact that Lax shocks are compressive.  

\emph{Remarks.} 
(i) The argument, notably inequality \eqref{gprimeprime}, reveals the geometric meaning of the genuine 
nonlinearity condition \eqref{GNL} for the Rankine-Hugoniot relations. 
\\
(ii) For the special case of pure radiation, $p=\rho/3$, the result updates 
considerations in \cite{FT1} to the dissipation tensor $\Delta T_\Box$ as derived 
in \cite{FT3}, which
(cf.\ a remark towards the end of Sec.\ 1 in \cite{FT3}) is not exactly identical with the
version originally proposed in \cite{FT1}.
 
\section{Shock profiles in viscous barotropic fluids with heat conduction}
\setcounter{equation}0
To include both viscosity and heat conduction, we have to use the full dissipation tensor   
\eqref{DeltaT}. 
In this case, we can work directly with \eqref{profeq}; in other words, we express 
\eqref{profileode} via \eqref{TintermsofB} as 
  \be\label{profeq1d}
B^{\alpha1 \gamma1}(\hat\psi)\hat\psi_\gamma'
=
T^{\alpha1}(\hat\psi)-q^\alpha,
\quad 
q^\alpha:=T^{\alpha1}(\psi^\pm)
\ee
and consider this system on its natural domain of definition,
$$
\Psi=\{(\psi^0,\psi^1)\in\R^2:\psi^0>|\psi^1|\},
$$
with $q^\alpha$ corresponding to a fixed Lax shock $\psi^-\to\psi^+$ as in Sec.\ 2,
the characteristic speeds $\lambda_{1,2}$ being both positive at $\psi^-$ and 
having different signs at $\psi^+$. 

%As $\psi_\beta=u_\beta/\theta$, we have 
%$-\theta^{-2}=\psi_\beta\psi^\beta$ and thus 
%$$
%\frac{d\theta}{d\psi_\beta}=\theta^3\psi^\beta.
%$$
Since
$$ %\be\label{idealgas}
T^{\alpha\beta}=\theta^3p'(\theta)\psi^\alpha\psi^\beta
+p(\theta)g^{\alpha\beta}=\frac{\partial L^\beta(\psi)}{\partial\psi_\alpha}
%=\frac{\partial^2 X(\theta)}{\partial \psi_\alpha\partial\psi_\beta},
%\quad X(\theta)=\int \theta^{-3}p(\theta)d\theta,
$$ 
%and 
%$$
%\frac{\partial X(\theta)}{\partial\psi_\beta}=p(\theta)\psi^\beta.
%$$ 
with
$$
%T^{\alpha\beta}-q^\alpha=
%\frac{\partial L^\beta(\psi)}{\partial\psi_\alpha}
%\quad\text{with}\quad
L^\beta(\psi)\equiv p(-(\psi^ \gamma\psi_\gamma)^{-1/2})\psi^\beta,
$$
and due to sharp causality $B^{\alpha 1\gamma 1}$ is positive definite, and $L$ with
$$L(\psi)=
L^1(\psi)-q^\gamma\psi_\gamma
$$ 
is a strict Liapunov function for \eqref{profeq1d}.

The Jacobian of the vector field $F^\alpha=T^{\alpha 1}-q^\alpha$ is the Hessian of $L$,
$$
\frac{\partial F^\alpha}{\partial \psi_\beta}
=
\frac{\partial^2L^1(\psi)}{\partial\psi_\alpha\partial\psi_\beta}
=
\frac{\partial^2L(\psi)}{\partial\psi_\alpha\partial\psi_\beta}
=:H^{\alpha\beta}(\psi),
$$
and the eigenvalues of $H^{\alpha\beta}(\psi)$
relative to the positive definite matrix 
$$
\frac{\partial T^{\alpha 0}}{\partial \psi_\beta}
=
%G^{\alpha\beta}(\psi):=
\frac{\partial^2L^0(\psi)}{\partial\psi_\alpha\partial\psi_\beta}>0
$$
are the characteristic speeds $\lambda_{1,2}$ at the fluid state $\psi$. 
If we choose $q^\alpha$ and signs as in Sec.\ 2, these speeds
are both positive at the left hand state $\psi^-$, and of different signs at the right hand 
state $\psi^+$ of the shock. This implies that 
$$
H^{\alpha\beta}(\psi^-)>0\quad\text{and}\quad\det H^{\alpha\beta}(\psi^+)<0;
$$
the exactly two critical points of the Liapunov function $L$ thus are 
a strict local minimum at $\psi^-$ and a hyperbolic saddle at $\psi^+$. 

For a shock of sufficiently small amplitude, i.e., for a parameter value 
$q=(q^0,q^1)$ with $q^1>0$ while $q_0^2-q_1^2>0$ is small enough, the shock profile 
exists, i.e., system \eqref{profeq1d} possesses a heteroclinic orbit 
with $\alpha$-limit $\psi^-$ and $\omega$-limit $\psi^+$. Consequently,
for this value of $q$ 
$$
c^+\equiv L(\psi^+)>L(\psi^-)\equiv c^-.
$$
and the $c^+$ level line of $L$ contains a closed curve (with a corner at $\psi^+$) 
whose interior 
$$
\Omega\subset L^{-1}((-\infty,c^+))\cap W^u(\psi^-) 
$$
contains $\psi^-$.
%
%\begin{figure}[h]
%centering
%\includegraphics[width=12cm]{figL.jpg}
%\caption{\label{fig:L} State space $\Psi$ with rest points $\psi^-,\psi^+$ and level curves of $L$ .}
%end{figure}
%
Now, on the one hand, as
$$
\frac{\partial L}{\partial\psi_1}(\psi^0,\psi^1)
=
p(\theta)+\theta^3p'(\theta)\psi_1^2-q^1 >0
$$
for sufficiently small 
$$
\theta^{-2}=-\psi^ \gamma\psi_\gamma<\psi_0^2,
$$
$L$ increases strictly on the line segment 
$$
S \equiv \{(\psi^0,\psi^1)\in\Psi:\psi^0 =\sigma\} 
$$   
if $\sigma>0$ is chosen small enough (in dependence on $q^1$). 
On the other hand, $L$ tends to $\pm \infty$ near $\partial\Psi$,
$$
\lim_{\psi^1\searrow-\psi^0}  L(\psi^0,\psi^1)=-\infty,\quad 
\lim_{\psi^1\nearrow+\psi^0}  L(\psi^0,\psi^1)=+\infty.
$$
Therefore the situation, including the shock profile %(cf.\ figure) 
is robust 
against perturbations of $q$ within the range given by \eqref{qqQ}. This means 
that every Lax shock has a profile. 

\section{Non-existence of shock profiles in the Bemfica-Disconzi-Noronha model}
\setcounter{equation}0
In 1940, Eckart has proposed equations for dissipative relativistic fluid dynamics \cite{E}. 
For barotropic fluids, the Eckart theory 
reads
\be \label{TeqE} 
{\partial \over \partial x^\beta}\left(T^{\alpha\beta}
+\Delta T^{\alpha\beta}_{E}
\right)
=0,
\ee
with
%
%\footnote{We use $g^{\alpha\beta}$ with signature $-+++$ and 
%the standard projection $\Pi^{\alpha\beta}=g^{\alpha\beta}+U^\alpha U^\beta$.}
%
\begin{eqnarray}
 \label{DeltaTEL}
-\Delta T^{\alpha\beta}_{E}=
\eta \Pi^{\alpha\gamma}\Pi^{\beta\delta}\left[\frac{\partial U_{\gamma}}{\partial x^{\delta}}+\frac{\partial U_{\delta}}{\partial x^{\gamma}}-\frac{2}{3}g_{\gamma\delta}\frac{\partial U^{\epsilon}}{\partial x^{\epsilon}}\right]
+\zeta \Pi^{\alpha\beta}\frac{\partial U^{\gamma}}{\partial x^{\gamma}}
+
%\Theta^{\alpha\beta}_E,
%\quad
%\Theta^{\alpha\beta}_E
%nb%=
\chi\left(\Pi^{\alpha\gamma}U^{\beta}+\Pi^{\beta\gamma}U^{\alpha}\right)
\frac{\partial\theta}{\partial x^{\gamma}}
\end{eqnarray}
where, as before, $\eta,\zeta,\chi>0$ denote the fluid's coefficients of shear and bulk viscosity, and heat conduction. 
Sometimes referred to as `relativistic Navier-Stokes', the Eckart equations \eqref{TeqE} 
have been fundamental to the development of fluid dynamics. 
However, like the classical Navier-Stokes equations, they do not have 
the property of finite speed of propagation. This violates causality, i.e., the principle that  
speeds of propagation must not be larger than that of light.

Since physically, Navier-Stokes is a first order theory, in \cite{FT3} we address the causality issue 
by introducing a rigorous notion of when two different dissipation tensors $\Delta T^{\alpha\beta}$ produce equations
which are equivalent in a first order sense, and    
show the following.
\begin{theo}\label{thmCE} %(Connection with Eckart theory.)
%If the speed of sound is strictly less than that of light, $s^2<1$, 
(i) Our hyperbolic Navier-Stokes system \eqref{Teq}
with $\Delta T^{\alpha\beta}=\Delta T^{\alpha\beta}_\Box$  and 
\eqref{choicetildezeta}
%\tilde\zeta=\zeta+s^2\sigma,\quad\text{i.e.,}\quad \sigma=((4/3)\eta+\zeta)/(1-s^2),
%\ee
is causal in the sense that all signal speeds are not larger than that of light. 
(ii) The system is  
first-order equivalent to the Eckart system \eqref{TeqE}.
\end{theo}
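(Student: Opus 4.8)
The plan is to treat the two assertions separately, since they concern genuinely different features of the system \eqref{Teq} with $\Delta T^{\alpha\beta}=\Delta T^{\alpha\beta}_\Box$: part (i) is a statement about the principal symbol of the second-order operator, whereas part (ii) is an on-shell reduction modulo the ideal equations of motion. For (i), I would first read off the coefficient tensor $B^{\alpha\beta\gamma\delta}$ from \eqref{DeltaT} by rewriting the $U$- and $\theta$-derivatives through the chain rule in terms of derivatives of $\psi_\gamma=U_\gamma/\theta$ and $\theta=(-\psi_\gamma\psi^\gamma)^{-1/2}$, so that $-\Delta T^{\alpha\beta}_\Box=-B^{\alpha\beta\gamma\delta}\partial_\delta\psi_\gamma$ with the coefficients $\eta,\check\zeta,\sigma,\chi$ entering through \eqref{choicetildezeta}. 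Since the maximal propagation speed of a second-order hyperbolic system is governed by its principal part, causality is equivalent to the statement that every characteristic covector $\zeta_\beta$ — i.e., every covector for which the matrix $B^{\alpha\beta\gamma\delta}\zeta_\beta\zeta_\delta$ (indices $\alpha,\gamma$) is singular — is non-spacelike.

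Using Lorentz invariance I would pass to the fluid rest frame $U^\alpha=(1,0,0,0)$ and, by spatial rotation, align the spatial part of $\zeta$ with $x^1$, reducing the problem to showing that $B^{\alpha\beta\gamma\delta}\zeta_\beta\zeta_\delta$ is nonsingular — in fact positive definite, by a continuity argument along the connected set of spacelike covectors anchored at the purely spacelike case $\zeta=(0,1,0,0)$, where the positivity of $B^{\alpha 1\gamma 1}$ recorded in Section 3 already holds — for every spacelike $\zeta$. Equivalently, the characteristic determinant is a polynomial in the phase speed $s=\zeta_0/|\vec\zeta|$ whose roots are the signal speeds, and the claim is that all roots are real and lie in $[-1,1]$. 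The heart of (i), and the step I expect to be the main obstacle, is to verify that the particular tuning \eqref{choicetildezeta} is exactly what forces those roots below light speed: the factor $1/(1-c_s^2)$ in $\sigma$ and the compensating $\chi\theta$ terms in $\check\zeta$ are engineered so that the fastest dissipative characteristic stays inside the light cone. I would factor the characteristic determinant into its longitudinal (acoustic) and transverse (shear/heat) blocks, evaluate each at the tuned coefficients, and check the bound $|s|\le 1$ block by block.

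For (ii), I would first recall from \cite{FT3} the precise notion of first-order equivalence: two dissipation tensors are first-order equivalent if their difference vanishes to first order in the gradients once the ideal equations \eqref{Euler} are imposed, so that the two tensors yield the same Navier--Stokes-order correction to the Euler dynamics. I would then derive the leading ideal relations by projecting $\partial_\beta T^{\alpha\beta}=0$, with $T^{\alpha\beta}$ as in \eqref{idealgas}, along and orthogonally to $U$: contraction with $U_\alpha$ expresses the expansion $\partial_\gamma U^\gamma$ through the comoving temperature derivative $U^\gamma\partial_\gamma\theta$ (with $c_s^2=\hat p'(\rho)$ entering the coefficient), while contraction with $\Pi_{\mu\alpha}$ expresses the acceleration $U^\gamma\partial_\gamma U^\mu$ through the spatial temperature gradient $\Pi^{\mu\gamma}\partial_\gamma\theta$.

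Substituting these relations into the difference $\Delta T^{\alpha\beta}_\Box-\Delta T^{\alpha\beta}_E$ of \eqref{DeltaT} and \eqref{DeltaTEL} is the decisive computation. The acceleration relation converts the comoving-derivative part of the $\sigma$-term of $\Delta T_\Box$ into a term of exactly the Eckart form $\chi(\Pi^{\alpha\gamma}U^\beta+\Pi^{\beta\gamma}U^\alpha)\partial_\gamma\theta$, while the expansion relation turns the $U^\alpha U^\beta\partial_\gamma U^\gamma$ piece and the bulk part $\check\zeta\,\Pi^{\alpha\beta}\partial_\gamma U^\gamma$ into temperature derivatives, so that the shift from $\zeta$ to $\check\zeta$ and the $\chi\theta$ corrections in \eqref{choicetildezeta} absorb the residual contributions. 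The main obstacle here is purely the tensor bookkeeping: one must check that after substitution every term of the difference cancels modulo second-order quantities, that the coefficients defined in \eqref{choicetildezeta} reproduce Eckart's $\eta,\zeta,\chi$ exactly, and that the discarded remainder is genuinely of higher order in the sense fixed by the definition of first-order equivalence. Verifying this cancellation completes (ii).
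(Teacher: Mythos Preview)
Your plan is a plausible outline of how a proof of this theorem might go, but there is nothing in the present paper to compare it against: Theorem~\ref{thmCE} is not proved here. It is stated in Section~4 as a result established in the authors' earlier paper \cite{FT3} (``in \cite{FT3} we\ldots show the following''), and no argument---not even a sketch---is given in this text. The theorem functions purely as background for the discussion contrasting the $\Delta T_\Box$ formulation with the BDN model.

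So the only honest comparison is: the paper's ``proof'' is a citation, while you have produced a substantive two-part strategy (principal-symbol analysis for causality, on-shell reduction for first-order equivalence). Whether your outline matches the actual argument in \cite{FT3} cannot be assessed from this paper alone; if you want to check your approach you would have to consult that reference directly.
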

Here, first-order equivalence is defined in terms of gradient-expansion transformations between different four-field theories 
\eqref{Teq} of dissipative fluid dynamics -- two systems are equi\-valent if the  change in $\Delta T$ occurs only at second 
order in the magnitude of the 
dissipation coefficients. See \cite{FT3,FT2} for a precise definition and a formal algebraic setup of first-order equi\-valence 
transformations.  

Our theory with $\Delta T^{\alpha\beta}=\Delta T^{\alpha\beta}_\Box$ is not the only formulation \eqref{Teq} that 
is first-oder equivalent to Eckart's. After all, the prominent theory given in Landau and Lifshitz \cite{LL} also is,
while it is again not causal. 
In \cite{BDN}, Bemfica, Disconzi, and Noronha have proposed for the dynamics of the pure radiation fluid, $p(\theta)=\theta^4/3$,
another four-field PDE formulation that is first-order equivalent to Eckart's (with $\zeta=\chi=0$), namely    
\be\label{NS}
\partial_\beta(T^{\alpha\beta}+\Delta T^{\alpha\beta}_{BDN})=0
\ee
by setting 
\be
\label{DeltaTBDN}
-\Delta T^{\alpha\beta}_{BDN}=
B^{\alpha\beta\gamma\delta}_{BDN}
\displaystyle{\frac{\partial\psi_\gamma}{\partial x^\delta}},
\ee
with 
\be\label{Bis}
B^{\alpha\beta\gamma\delta}_{BDN}
=\eta
B^{\alpha\beta\gamma\delta}_E-
\mu B^{\alpha\beta\gamma\delta}_1-\nu B^{\alpha\beta\gamma\delta}_2
\ee
where the classical Eckart viscosity tensor 
\be
\begin{aligned}\label{Bvisc}
B^{\alpha\beta\gamma\delta}_E&=
\Pi^{\alpha\gamma}\Pi^{\beta\delta}+\Pi^{\alpha\delta}\Pi^{\beta\gamma}
-\frac23\Pi^{\alpha\beta}\Pi^{\gamma\delta}
%\quad\text{with}\quad\Pi^{\alpha\gamma}=u^\alpha u^\beta+g^{\alpha\beta}
\end{aligned}
\ee
is augmented via
\be\label{Bthervelo}
B^{\alpha\beta\gamma\delta}_1=
(3 U^\alpha U^\beta+\Pi^{\alpha\beta})
(3 U^\gamma U^\delta+\Pi^{\gamma\delta}),
\quad
B^{\alpha\beta\gamma\delta}_2=
(U^\alpha {\Pi^\beta}_\epsilon+ U^\beta{\Pi^\alpha}_\epsilon) 
(U^\gamma\Pi^{\delta\epsilon}+ U^\delta\Pi^{\gamma\epsilon}).
\ee
It was shown in \cite{BDN} that 
% Bemfica, Disconzi, and Noronha have shown that 
this formulation, which we will here briefly refer to as `the BDN model', is indeed causal if and only if, 
relative to the classical coefficient $\eta$ of viscosity, the
coefficients $\mu$ and $\nu$ of the ``regulators'' 
$ 
B^{\alpha\beta\gamma\delta}_1,
B^{\alpha\beta\gamma\delta}_2
$ 
satisfy
\be\label{generallycausal}
\mu\ge \frac43\eta\quad\text{and}\quad 
\nu\le\left(\frac1{3\eta}-\frac1{9\mu}\right)^{-1}.
\ee
On the other hand the following theorem was proven in \cite{F21}: 
\begin{theo}
If the dissipation coefficients $\eta,\mu,\nu>0$ satisfy the strict causality condition
\be\label{strictlycausal}
\mu\ge \frac43\eta\quad\text{and}\quad 
\nu< \left(\frac1{3\eta}-\frac1{9\mu}\right)^{-1},
\ee
then the BDN model always possesses Lax shocks that do not admit any dissipation profile.
\end{theo}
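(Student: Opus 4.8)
The plan is to trace the non-existence to a single structural fact: the regulators $B_1,B_2$ added in \eqref{Bis} to enforce causality spoil the positive definiteness of the reduced dissipation matrix that produced the profiles of Sections 2 and 3. I would begin with the same reduction as in Section 3. Fixing $\xi_\beta=\delta_{\beta1}$ and $\psi_2=\psi_3=0$, the profile equation \eqref{profeq1d} becomes a planar system
\be
M(\hat\psi)\,\hat\psi'=F(\hat\psi),\qquad F^\alpha=T^{\alpha1}-q^\alpha\ (\alpha=0,1),
\ee
on $\Psi=\{\psi^0>|\psi^1|\}$, where $M=(B^{\alpha1\gamma1}_{BDN})_{\alpha,\gamma=0,1}$ is the reduced dissipation matrix. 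The ideal part is untouched, so $F=\nabla L$ with $L=L^1-q^\gamma\psi_\gamma$, the rest points are the shock states $\psi^\pm$, and the Lax/ideal structure again yields $H=\nabla^2L$ with $H(\psi^-)>0$ and $\det H(\psi^+)<0$. Everything that distinguishes the BDN model from the existence result is contained in $M$, which in Section 3 was positive definite and drove the Liapunov argument.

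The second step is to compute $M$ for the radiation equation of state $p=\theta^4/3$ and to show that strict causality destroys its definiteness. Writing $U^0=\cosh\varphi,\ U^1=\sinh\varphi$ and using \eqref{Bvisc}--\eqref{Bthervelo}, the three blocks $B_E^{\cdot1\cdot1}$, $B_1^{\cdot1\cdot1}=(3U^\alpha U^1+\Pi^{\alpha1})(3U^\gamma U^1+\Pi^{\gamma1})$ and $B_2^{\cdot1\cdot1}$ are explicit functions of $\varphi$. The decisive observation is already visible at the rest state $U^\alpha=(1,0,0,0)$, where $B_E^{\cdot1\cdot1}=\mathrm{diag}(0,\tfrac43)$, $B_1^{\cdot1\cdot1}=\mathrm{diag}(0,1)$ and $B_2^{\cdot1\cdot1}=\mathrm{diag}(1,0)$, so that
\be
M=\mathrm{diag}\!\left(-\nu,\ \tfrac43\eta-\mu\right).
\ee
Under \eqref{strictlycausal} this is negative definite, because the causality lower bound $\mu\ge\tfrac43\eta$ is exactly what renders the second entry nonpositive while $\nu>0$ makes the first negative. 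Thus, in sharp contrast to the sharply causal tensor $\Delta T_\Box$, the BDN matrix $M$ carries the wrong sign at low fluid velocity.

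The third step converts this sign into non-existence. Because $H(\psi^-)>0$, the linearization of the profile flow at $\psi^-$ is $M(\psi^-)^{-1}H(\psi^-)$, whose inertia is governed by the signature of $M(\psi^-)$. If $M(\psi^-)$ is negative definite then $M(\psi^-)^{-1}H(\psi^-)$ has two negative eigenvalues, so $\psi^-$ is an asymptotically stable sink of \eqref{profeq1d}: it has no unstable manifold, and no orbit can have $\psi^-$ as its $\alpha$-limit. Since a dissipation profile requires precisely such an orbit, none exists. I would realize this for a concrete family by taking weak Lax shocks, whose endpoints cluster near a fixed sonic state as the amplitude tends to zero; by continuity it then suffices to verify that $M$ retains its negative signature at that sonic boost. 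Should $M(\psi^-)$ instead turn out merely indefinite at the relevant boost, the same computation still shows $M$ is not positive definite, so that the singular locus $\Sigma=\{\det M=0\}$ is nonempty and separates $\psi^-$ from $\psi^+$; as $\hat\psi'=(\det M)^{-1}(\mathrm{adj}\,M)F$ blows up on $\Sigma$, no $C^1$ orbit defined on all of $\R$ can cross it, and again there is no profile.

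The step I expect to be the main obstacle is the passage from the rest frame to the actual shock states in the third step. The upstream state $\psi^-$ of an acoustic Lax shock sits at a supersonic boost $\varphi\ne0$, so one must show that $M$ still fails to be positive definite there --- equivalently, that $\psi^-$ fails to be a source, or that $\Sigma$ separates the two endpoints --- throughout the full causal range \eqref{strictlycausal}. This amounts to an explicit but delicate sign analysis of $\det M$ and of the entries of $M$ as functions of $\varphi$, matched against the Rankine--Hugoniot relations that fix $\psi^\pm$ in terms of $q$; it is here that the strictness of the $\nu$-bound, as opposed to the marginal case $\nu=(\tfrac1{3\eta}-\tfrac1{9\mu})^{-1}$, is needed to keep the conclusion on the correct side.
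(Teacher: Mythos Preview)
The paper does not prove this theorem. The sentence introducing it reads ``On the other hand the following theorem was proven in \cite{F21}'', and no argument is supplied here; the result is simply quoted from the separate article \cite{F21}. There is therefore no proof in the present paper against which your proposal can be compared.

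On the proposal itself: your structural diagnosis --- that the regulators $B_1,B_2$ destroy the positive definiteness of the reduced dissipation matrix $M=(B^{\alpha1\gamma1}_{BDN})$ which drove the Liapunov argument of Section~3 --- is the right idea, and your rest-frame computation $M=\mathrm{diag}(-\nu,\tfrac43\eta-\mu)$ is correct and illuminating. You have also honestly identified the real gap: the upstream state $\psi^-$ of an acoustic Lax shock is supersonic (for pure radiation, at a boost with $|\tanh\varphi|>1/\sqrt3$), so the rest-frame signs do not apply directly, and one must analyze $M$ at the actual boosted endstates. Your fallback route through the singular locus $\Sigma=\{\det M=0\}$ is softer than you suggest: that $\hat\psi'=(\det M)^{-1}(\mathrm{adj}\,M)F$ formally blows up on $\Sigma$ does not by itself rule out a $C^1$ trajectory meeting $\Sigma$, since $(\mathrm{adj}\,M)F$ may vanish there in a compatible way, nor does it show that $\Sigma$ actually separates $\psi^-$ from $\psi^+$. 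To turn the outline into a proof one must carry the index (or $\det M$) computation through at the relevant boosts and match it against the Rankine--Hugoniot data --- precisely the step you flag as the main obstacle, and the one for which the present paper offers no help.
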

\vskip -.3cm
This contrasts sharply with our Theorems 1 and 2 above. 

Theorem 4 would not preclude the possibility that for some \emph{sharply causal} choice of $\mu$ and $\nu$, i.e., 
in the case 
\be\label{sharplylycausal}
\mu\ge \frac43\eta\quad\text{and}\quad 
\nu=\left(\frac1{3\eta}-\frac1{9\mu}\right)^{-1}
\ee
(cf.\ \cite{F21}), all Lax shocks do have profiles again. However, Pellhammer has shown \cite{P}:  
\begin{theo}
Whatever values $\eta,\mu,\nu>0$ with \eqref{sharplylycausal} are assumed, the BDN model always possesses a range of Lax 
shocks that have either no profile at all or an oscillatory profile, i.e., a profile whose orbit infinitely spirals around one 
of the endstates.        
\end{theo}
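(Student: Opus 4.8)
The plan is to transport the phase-plane analysis of Section 3 to the BDN tensor and to locate, within the sharply causal regime \eqref{sharplylycausal}, a range of Lax shocks for which the gradient-like mechanism that produced monotone profiles in Section 3 breaks down by turning one endstate into a focus of the profile ODE. Concretely, I would reduce \eqref{NS} to the planar autonomous system $A(\psi)\psi'=F(\psi)$ on $\Psi$, exactly as in \eqref{profeq1d}, with $A^{\alpha\gamma}=B^{\alpha1\gamma1}_{BDN}(\psi)$ assembled from \eqref{Bis}--\eqref{Bthervelo} for pure radiation $p(\theta)=\theta^4/3$, with $F^\alpha=T^{\alpha1}-q^\alpha$, and with $DF=H$ the Hessian of $L$. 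A Lax shock fixes endstates $\psi^\pm$ satisfying $H(\psi^-)>0$ and $\det H(\psi^+)<0$, just as in Section 3. The decisive difference is that, unlike the positive definite slice of Section 3, $A=B^{\alpha1\gamma1}_{BDN}$ fails to be positive definite in the sharply causal case, so $L$ is no longer a strict Liapunov function and the flow ceases to be gradient-like. The local portrait at each endstate is then dictated by the generalized eigenvalues of the pencil $(H,A)$, i.e. by the roots $\lambda$ of
\[
\det\bigl(H(\psi^\pm)-\lambda\,A(\psi^\pm)\bigr)=0 .
\]

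The heart of the matter is the sign of the discriminant $D(q)$ of this quadratic. I would evaluate $A$ and $H$ at the endstates, express them through the single boost parameter prescribed by $q=(q^0,q^1)$, and show that precisely because $\nu$ takes the boundary value in \eqref{sharplylycausal}, $D(q)$ becomes negative on a nonempty open set of Lax-admissible $q$. On that set the two roots $\lambda$ form a complex-conjugate pair, so the corresponding endstate is a focus; since $\det H$ has opposite signs at $\psi^-$ and $\psi^+$ while $\det A$ keeps one sign, the other endstate then automatically satisfies $\det(A^{-1}H)<0$ and is a hyperbolic saddle. This is the marginal phenomenon that distinguishes \eqref{sharplylycausal} from the strictly causal interior of Theorem 4, where the analogous roots stay real and one obtains outright non-existence: the boundary value of $\nu$ is exactly what tunes the trace--determinant balance of $(H,A)$ so that $D$ can change sign.

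The dichotomy is then immediate. Convergence of an orbit to a focus is necessarily spiraling, so for every $q$ in the range just found there can be no monotone connecting orbit: either no orbit has the focus as a limit set, in which case the shock has no profile at all, or the one-dimensional stable (resp.\ unstable) manifold of the saddle endstate, continued toward the focus, does limit on it and then does so by spiraling infinitely, which is precisely an oscillatory profile. Letting $q$ range over the identified open set yields the asserted range of Lax shocks, and the argument is insensitive to which of $\psi^\pm$ carries the focus. I expect the discriminant computation to be the genuine obstacle: it requires assembling the (non positive definite) matrix $A=B^{\alpha1\gamma1}_{BDN}$ from $B_E,B_1,B_2$ with the marginal coefficient $\nu=(1/3\eta-1/9\mu)^{-1}$, verifying that $\det A$ does not vanish between the endstates, and tracking, as a function of the velocity, exactly when the pencil roots leave the real axis over an interval corresponding to genuine Lax shocks.
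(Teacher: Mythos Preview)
The paper does not contain a proof of this theorem. Theorem~5 is quoted from Pellhammer \cite{P} and merely stated as a contrast to Theorems~1 and~2; there is no argument in the present paper against which your proposal can be compared.

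As an outline toward the cited result, your plan is plausible in spirit: linearizing the planar profile system $A\psi'=F$ at the endstates and showing that, for sharply causal $\nu$, one of them becomes a focus is the natural mechanism behind oscillatory profiles. Two points of caution, however. First, since $\det H(\psi^+)<0$, the product of the pencil roots at $\psi^+$ equals $\det H(\psi^+)/\det A(\psi^+)$; whenever this is negative the roots are real with opposite signs, so $\psi^+$ is forced to be a saddle and the focus, if any, must occur at $\psi^-$. The argument is therefore not symmetric in $\psi^\pm$ as you suggest. Second, in the BDN model the slice $A=B^{\alpha1\gamma1}_{BDN}$ is indefinite, and you cannot simply assume that $\det A$ keeps one sign between the endstates; any degeneracy of $A$ along the would-be orbit renders the ODE singular there and must be dealt with explicitly rather than assumed away. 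The discriminant computation you yourself flag as the ``genuine obstacle'' is indeed the entire substance of the result, and without carrying it out the proposal remains a sketch rather than a proof.
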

Compared with non-relativistic gas dynamics \cite{Gi}, this property seems exotic. It would be interesting to know 
whether oscillatory shock profiles in the BDN model are dynamically stable.  
 
For positive stability results on shock profiles in hyperbolically regularized systems of conservation laws see \cite{B}.  

{\bf Conclusion.} Our formulation \eqref{Teq}, \eqref{DeltaT} of relativistic Navier-Stokes is causal, 
takes a solvable hyperbolic form, is physically justified by its leading-order equivalence with Eckart and Landau-Lifshitz, 
and, like classical Navier-Stokes, appears phenomenologically correct notably in the sense that it captures all shock waves
consistently.
\newpage

\end{document}